\numberwithin{equation}{section}
\newcommand{\norm}[1] {\left \| #1 \right \|}
\newcommand{\inclu}[0] {\ar@{^{(}->}}
\newcommand{\X}{{\mathcal{X}}}
\newcommand{\dist}{{\rm dist}}
\newcommand{\proj}{{\rm proj}}
\newcommand{\R}{\mathbb{R}}
\newcommand{\trace}{\mathrm{Tr}}
\newcommand{\RR}{\mathbb{R}}
\newcommand{\argmin}{\operatornamewithlimits{argmin}}
\newtheorem{thm}{Theorem}[section]
\newtheorem{lem}[thm]{Lemma}
\newtheorem{assumption}{Assumption}
\newtheorem{example}{Example}[section]
\theoremstyle{remark}
\DeclarePairedDelimiter{\dotp}{\langle}{\rangle}
\begin{document}
	
	\title{Subgradient methods for sharp weakly convex functions}
	
	
	\author{Damek Davis\thanks{School of Operations Research and Information Engineering, Cornell University,
			Ithaca, NY 14850, USA;
			\texttt{people.orie.cornell.edu/dsd95/}.}
		\and 
		Dmitriy Drusvyatskiy\thanks{Department of Mathematics, U. Washington, 
			Seattle, WA 98195; \texttt{www.math.washington.edu/{\raise.17ex\hbox{$\scriptstyle\sim$}}ddrusv}. Research of Drusvyatskiy was supported by the AFOSR YIP award FA9550-15-1-0237 and by the NSF DMS   1651851 and CCF 1740551 awards.}
		\and
		Kellie J. MacPhee\thanks{Department of Mathematics, U. Washington, 
			Seattle, WA 98195; \texttt{   sites.math.washington.edu/{\raise.17ex\hbox{$\scriptstyle\sim$}}kmacphee}} 
		\and Courtney Paquette\thanks{Industrial and Systems
                  Engineering Department, Lehigh University,
                  Bethlehem, PA 18015;
                  \texttt{sites.math.washington.edu/{\raise.17ex\hbox{$\scriptstyle\sim$}}yumiko88/}. Research
                  of Paquette was supported by NSF CCF 1740796.
                }   
	}

	\date{}
	\maketitle
	
\begin{abstract}
	Subgradient methods converge linearly on a convex function that grows sharply away from its solution set. In this work, we show that the same  is true for sharp functions that are only weakly convex, provided that the subgradient methods are initialized within a fixed tube around the solution set. A variety of statistical and signal processing tasks come equipped with good initialization, and provably lead to formulations that are both weakly convex and sharp. Therefore,  in such settings, subgradient methods can serve as inexpensive local search procedures. We illustrate the proposed techniques on phase retrieval and covariance estimation problems.
	\end{abstract}

	\section{Introduction}
	Typical methods for statistics and signal processing tasks follow the two-step strategy: $(1)$ find a moderately accurate solution $\hat x$ at a low sample complexity cost (e.g., using spectral initialization), and $(2)$ refine $\hat x$ by an iterative ``local search algorithm'' that converges rapidly under natural statistical assumptions. For smooth problem formulations, the term ``local search'' almost universally refers to gradient descent or a close variant thereof; see e.g. \cite{pmlr-v40-Jain15,Tu:2016:LSL:3045390.3045493,wirt_flow,JJKN17,chen2015fast,meka2009guaranteed,boumal2016nonconvex,brutzkus2017globally}. For nonsmooth and nonconvex problems, the meaning of local search is much less clear.
	In this work, we ask the following question.
	
	\begin{quote}
	Is there a generic gradient-based local search procedure for nonsmooth and nonconvex problems, which converges linearly under standard regularity conditions?
	\end{quote}
	
	
	Not surprisingly, our approach is rooted in  subgradient methods for convex optimization. To motivate the discussion,	consider the constrained optimization problem
	\begin{equation}\label{eqn:target_prob}
	\min_{x\in \mathcal{X}}~ g(x),
	\end{equation}
	where $g$ is an $L$-Lipschitz convex function on $\R^d$ and $\mathcal{X}$ is a closed convex set. Given a current iterate $x_k$, subgradient methods proceed as follows: 
	\begin{equation*}
	\left\{
	\begin{aligned}
	&\textrm{Choose any }\zeta_k\in \partial g(x_k)\\
	&\textrm{Set } x_{k+1}=\proj_{\mathcal{X}}\left(x_k-\alpha_k\cdot\frac{\zeta_k}{\|\zeta_k\|}\right)
	\end{aligned}\right\}.
\end{equation*}
	Here, the symbol $\proj_{\mathcal{X}}(y)$ denotes the nearest point of  $\mathcal{X}$ to $y$ and $\{\alpha_k\}$ is a specified stepsize sequence. The choice of the sequence $\{\alpha_k\}$ determines the behavior of the scheme, and is the main distinguishing feature among  subgradient methods. In this work, we will only be interested in subgradient methods that are linearly convergent. As usual, linear rates of convergence of iterative methods require some regularity conditions to hold. Here, the appropriate regularity condition is {\em sharpness} \cite{pol_sharp,weak_sharp} (or equivalently a global error bound): there exists a real $\mu>0$ satisfying 
	$$g(x)-\min_{x\in \mathcal{X}} g\geq \mu\cdot \dist(x;\X^*)\qquad \textrm{for all } x\in \X,$$
	where $\X^*$ denotes the set of minimizers of \eqref{eqn:target_prob}. 
	Assuming sharpness holds,  subgradient methods, with a judicious choice of $\{\alpha_k\}$, produce iterates that converge to $\X^*$ at the linear rate $\sqrt{1-(\mu/L)^2}$. Results of this type date back to 60's and 70's \cite{goff,erem_subgrad,min_unsmooth,shor_rate_subgrad,subgrad_surv}, while some more recent approaches have appeared in \cite{stagg,rsg,jstone}.

	Various contemporary problems lead to formulations that are indeed sharp, but are only {\em weakly convex} and {\em locally Lipschitz}. Recall that a function $g$ is $\rho$-weakly convex \cite{Nurminskii1973} if the perturbed function $x\mapsto g(x)+\frac{\rho}{2}\|\cdot\|^2$ is convex for some $\rho>0$. Note that weakly convex functions need not be smooth nor convex.  A quick computation (Lemma~\ref{lem:region_without_stat}) shows that if $g$ is $\mu$-sharp and $\rho$-weakly convex, then there is a tube around the solution set $\X^*$ that contains no extraneous stationary points:
	$$\mathcal{T}:=\left\{x\in\X: \dist(x;\X^*)\leq \frac{2\mu}{\rho}\right\}.$$
	In this work, we show that the standard linearly convergent subgradient methods originally designed for convex problems, apply in this much greater generality, provided they are initialized within a slight contraction of the tube $\mathcal{T}$. The methods exhibit essentially the same linear rate of convergence as in the convex case, while the weak convexity constant $\rho$ only determines the validity of the initialization. We focus on three step-size rules: Polyak stepsize \cite{erem_subgrad,min_unsmooth}, geometrically decaying step \cite{shor_rate_subgrad,goff}, and constant stepsize \cite{stagg,rsg,jstone}. As proof of concept, we illustrate the resulting algorithms on phase retrieval and covariance estimation problems. 
	
	Our current work sits within the broader scope of analyzing subgradient and proximal methods for weakly convex problems \cite{Nurminskii1973,Nurminskii1974,davis2018stochastic,prox_error,duchi_ruan,duchi_ruan_PR,comp_DP,prixm_guide_subgrad}; see also the recent survey \cite{prox_point_surv}. In particular, the  paper \cite{davis2018stochastic}  proves a global sublinear rate of convergence, in terms of a natural stationarity measure, of a (stochastic) subgradient method on any weakly convex function. In contrast, here we are interested in subgradient methods that are locally linearly convergent under the additional sharpness assumption. The arguments we present are all quick modification of the proofs already available in the convex setting. Nonetheless, we believe that the drawn conclusions are interesting and  powerful,  opening the door to generic local search procedures for nonsmooth and nonconvex problems.

	

	\section{Notation}
	Throughout, we consider the Euclidean space $\R^d$, equipped with the inner-product $\langle \cdot,\cdot\rangle$ and the induced norm $\|x\|:=\sqrt{\langle x,x\rangle}$. The {\em distance} and the {\em projection} of any point $y\in\R^d$ onto a set $\mathcal{X}$, are defined by 
\begin{align*}
\dist(y;\mathcal{X}):=\inf_{x\in \mathcal{X}}~\|y-x\|\qquad\qquad\textrm{and}\qquad\qquad
\proj_{\mathcal{X}}(y):=\argmin_{x\in \mathcal{X}}~\|y-x\|,
\end{align*}
respectively. Note that $\proj_{\mathcal{X}}(y)$ is nonempty as long as $\mathcal{X}$ is a closed set. The {\em indicator function} of a set $\X$, denoted by $\delta_{\X}$, is defined to be zero on $\X$ and $+\infty$ off it.

\subsection{Weakly convex functions}
Our main focus is on those functions that are convex up to  an additive quadratic  perturbation.
 Namely, a function $g\colon\R^d\to\R\cup\{+\infty\}$ is called $\rho$-{\em weakly convex} (with $\rho\geq 0$) if the assignment $x\mapsto g(x)+\tfrac{\rho}{2}\|x\|^2$ is a convex function. The algorithms we consider will all use generalized derivative constructions.
 Variational analytic literature highlights a number of distinct subdifferentials (e.g. \cite{RW98,Mord_1,penot_book}); for weakly convex functions, all these constructions coincide.
  Consider a $\rho$-weakly convex function $g$. 
The {\em subdifferential}  of $g$ at $ x$, denoted  ${\partial} g( x)$, is the set  of all vectors $v\in\R^d$ satisfying
\begin{equation}\label{eqn:weak_subdiff}
	g(y)\geq g( x)+\langle v,y- x\rangle +o(\|y- x\|)\qquad \textrm{ as }y\to  x.	
\end{equation}
Though the condition \eqref{eqn:weak_subdiff} appears to lack uniformity with respect to the basepoint $x$, the subgradients of $g$ automatically satisfy the much stronger property \cite[Theorem 12.17]{RW98}:
\begin{equation}\label{eqn:stronger_ineq}
g(y)\geq g(x)+\langle v,y-x\rangle-\frac{\rho}{2}\|y-x\|^2,\qquad\forall x,y\in \R^d,~v\in \partial g(x).
\end{equation}
 Thus we may use the two conditions, \eqref{eqn:weak_subdiff} and \eqref{eqn:stronger_ineq}, interchangeably for weakly convex functions. We note in passing that localizing condition \eqref{eqn:stronger_ineq} leads to so-called prox-regular functions, introduced in \cite{prox_reg}.

 Weakly convex functions are widespread in applications and are typically easy to recognize. One common source is the composite problem class:
\begin{equation}\label{eqn:comp}
\min_{x}~ F(x):=h(c(x)),
\end{equation}
where $h\colon\R^m\to\R$ is convex and $L$-Lipschitz, and $c\colon\R^d\to\R^m$ is a $C^1$-smooth map with $\beta$-Lipschitz gradient. An easy argument shows that  $F$ is $L\beta$-weakly convex. This is a worst case estimate. In concrete circumstances, the composite function $F$ may have a much more favorable weak convexity constant $\rho$. The elements of the subdifferential $\partial F(x)$ are straightforward to compute through the chain rule \cite[Theorem 10.6, Corollary 10.9]{RW98}:
	$$\partial F(x)=\nabla c(x)^{*}\partial h(c(x))\qquad \textrm{for all }x\in \R^d.$$
	For a discussion of some recent uses of weakly convex functions in optimization, see the short survey \cite{prox_point_surv}.
	Throughout the paper, we will use the following two running examples to illustrate our results.
	
\begin{example}[Phase retrieval]
	{\rm
		Phase retrieval is a common computational problem, with applications in 
		diverse areas such as imaging, X-ray crystallography, and speech processing. 
		For simplicity, we will focus on the version of the problem over the reals.
		The (real) phase retrieval problem seeks to determine a point $x$ satisfying the 
		magnitude conditions, $$|\langle a_i,x\rangle|^2\approx b_i\quad \textrm{for 
		}i=1,\ldots,m,$$ where $a_i\in \R^d$ and $b_i\in\R$ are given. 
		Note that we can only recover the optimal $x$ up to a universal sign change, since $|\langle a_i,x\rangle| = |\langle a_i,-x\rangle|$.
		In this work, we will focus on the following optimization formulation of the problem \cite{eM,duchi_ruan_PR, proj_weak_dim}:
		$$\min_x ~\frac{1}{m}\sum_{i=1}^m |\langle a_i,x\rangle^2-b_i|.$$
		Clearly, this is an instance of \eqref{eqn:comp}.
		Indeed, under mild statistical assumptions on the way $a_i$ are generated, the  formulation is $\rho$-weakly convex, for some numerical constant $\rho$ independent of $d$ and $m$ \cite[Corollary 3.2]{duchi_ruan_PR}.  Moreover, under an appropriate model of the noise in the measurements, the problem is sharp \cite[Propostion 3]{duchi_ruan_PR}.
		It is worthwhile to mention that numerous 
		other approaches to phase retrieval exist, based on different 
		problem 
		formulations; see for example \cite{wirt_flow,rand_quad,phase_nonconv,versh_kac}.
			
		Experiment set-up: All of the experiments on phase retrieval will be generated according to the following procedure. In the {\em exact set-up}, we  generate standard Gaussian measurements $a_i\sim N(0,I_{d\times d})$,  for $i=1,\ldots, m$, and generate the target signal $\bar x\sim N(0,I_{d\times d})$. We then set $b_i = \langle a_i, \bar x\rangle^2$ for each $i=1,\ldots, m$. In the {\em corrupted set-up}, we generate $a_i$ and $\bar x$ as in the noiseless case. We then corrupt a proportion of the measurements with  outliers. Namely, we set $b_i=(1-z_i) \langle a_i,\bar x\rangle^2 +z_i |\zeta_i|$, where $z_i \sim \textrm{Bernoulli}(0.1)$ and $\zeta_i\sim \mathcal{N}(0,100)$. 		}		
	\end{example}

\begin{example}[Covariance matrix estimation]{\rm
	The problem of covariance estimation from quadratic measurements, introduced in  \cite{chen2015exact}, is a higher rank variant of  phase retrieval. Let $a_1, \ldots, a_m \in\R^d$ be measurement vectors. The goal is to recover a low rank decomposition of a covariance matrix $\bar X\bar X^T$, with $\bar X\in\RR^{d \times r}$, from quadratic measurements
	$$
	b_i  \approx a_i^T \bar X\bar X^T a_i  = \trace(\bar X\bar X^T a_ia_i^T).
	$$
	Note that we can only recover $\bar X$ up to multiplication by an orthogonal matrix.
	This problem arises in a variety of contexts, such as covariance sketching for data streams and spectrum estimation of stochastic processes. We refer the reader to \cite{chen2015exact} for details.
	In our examples, we will assume $m$ is even and will focus on the potential function 
	\begin{equation}\label{eqn:cov_est}
			\min_x ~\frac{1}{m}\sum_{i=1}^m \left|\left\langle XX^T,a_{2i}a_{2i}^T-a_{2i-1}a_{2i-1}^T\right\rangle-(b_{2i}-b_{2i-1})\right|.
			\end{equation}
	Under exact measurements, i.e., $b_i  = a_i^T \bar X\bar X^T a_i $ and under appropriate statistical assumptions on how $a_i$ are generated,  the formulation \eqref{eqn:cov_est} is  $\rho$-weakly convex for a numerical constant $\rho$, independent of $d$ or $m$, and is sharp. Indeed, it is a simple consequence of two results, namely~\cite[Corollary 1]{chen2015exact} and~\cite[Lemma 5.4]{Tu:2016:LSL:3045390.3045493}. It is possible to show the objective is also sharp when the measurements are corrupted by gross outliers. This guarantee is beyond the scope of our current work, and will appear in a different paper.

		Experiment set-up: All of the experiments on covariance matrix estimation will be generated according to the following procedure. In the {\em exact set-up}, we  generate standard Gaussian measurements $a_i\sim N(0,I_{d\times d})$   for $i=1,\ldots, m$, and generate the target matrix $\bar X\in\R^{d\times r}$ as a standard Gaussian. We then  set $b_i = \|{\bar X}^T a_i\|^2_F$ for each $i=1,\ldots, m$. In the {\em corrupted set-up}, we generate $a_i$ and $\bar X$ as in the exact case. We then corrupt a proportion of the measurements with  outliers. Namely, we set $b_i=(1-z_i)\| \bar X^Ta_i \|_F^2+z_i |\zeta_i|$, where $\zeta_i\sim \mathcal{N}(0,100)$ and $z_i \sim \textrm{Bernoulli}(0.1)$. All plots will show iteration counter $k$ versus the scaled Procrustes distance $\displaystyle\dist(X_k, \mathcal{X}^*) / \| \bar X\| = \min_{\Omega^T\Omega = I} \|\Omega X_k-\bar X\|_F/\|\bar X\|_F$.

}
\end{example}

\subsection{Setting of the paper}

	Throughout the manuscript, we make the following assumption.

	\begin{assumption}\label{assum:weak_conv_sharp}
	{\rm Consider the optimization problem 
		\begin{equation}\label{eqn:stat}
		\min_{x\in\mathcal{X}}~ g(x),
		\end{equation}
satisfying the following properties for some real $\mu,\rho >0$.
	\begin{enumerate}
	\item {\bf (Weak-convexity)} The function $g\colon\R^d\to\R$ is $\rho$-weakly convex, and the set $\mathcal{X}\subset\R^d$ is  closed and convex. The set of minimizers $\displaystyle \mathcal{X}^*:=\argmin_{x\in \mathcal{X}} g(x)$ is nonempty.
	\item {\bf (Sharpness)} The inequality 
	$$g(x)-\min_{ \mathcal{X}} g\geq \mu\cdot \dist(x;\mathcal{X}^*)\qquad \textrm{ holds for all }x\in \mathcal{X}.$$
	\end{enumerate}}
	\end{assumption}

We will say that a point $\bar x\in \mathcal{X}$ is {\em stationary} for the target problem  \eqref{eqn:stat} if
	$$g(x)-g(\bar x)\geq o(\|x-\bar x\|)\qquad \textrm{ as }x\xrightarrow{\mathcal{X}} \bar x.
$$
That is, $\bar x$ is {\em stationary} precisely when the zero vector is a subgradient of $g+\delta_{\mathcal{X}}$ at $\bar x$.

Shortly, we will discuss subgradient methods that converge linearly to $\X^*$ under appropriate initialization. As a first step, therefore, we must identify a neighborhood of $\X^*$ that is devoid of extraneous stationary points of \eqref{eqn:stat}. This is the content of the following lemma.

	\begin{lem}[Neighborhood with no stationary points]\label{lem:region_without_stat}
		The problem \eqref{eqn:stat} has no stationary points $x$ satisfying 
		\begin{equation}\label{eq:region_for_linear}
		0<\dist(x;\X^*)< \frac{2\mu}{\rho}.
		\end{equation}
	\end{lem}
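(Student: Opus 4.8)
The plan is to argue by contradiction: suppose $\bar x$ is a stationary point of \eqref{eqn:stat} with $0<\dist(\bar x;\X^*)<\frac{2\mu}{\rho}$, and derive a contradiction by producing two incompatible bounds on the optimality gap $g(\bar x)-\min_{\X} g$. The lower bound comes for free from sharpness, so the real content is to extract a matching \emph{quadratic} upper bound out of stationarity together with weak convexity.

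First I would record that stationarity of $\bar x$ means precisely $0\in\partial(g+\delta_{\X})(\bar x)$, and that the function $g+\delta_{\X}$ inherits $\rho$-weak convexity from $g$, since $\delta_{\X}$ is convex (because $\X$ is closed and convex). Hence the strong subgradient inequality \eqref{eqn:stronger_ineq} applies to $g+\delta_{\X}$. Feeding in the subgradient $v=0$ at the base point $x=\bar x$ gives, for every $y$,
$$(g+\delta_{\X})(y)\geq (g+\delta_{\X})(\bar x)-\frac{\rho}{2}\|y-\bar x\|^2.$$

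Next I would specialize $y$ to a nearest minimizer, i.e.\ a point $y\in\X^*$ achieving $\|y-\bar x\|=\dist(\bar x;\X^*)$ (which exists since $\X^*$ is closed). Because $y\in\X^*\subset\X$ and $\bar x\in\X$, both indicator terms vanish, so $(g+\delta_{\X})(y)=g(y)=\min_{\X} g$ and $(g+\delta_{\X})(\bar x)=g(\bar x)$. Rearranging the displayed inequality then yields the upper bound $g(\bar x)-\min_{\X} g\leq \frac{\rho}{2}\dist(\bar x;\X^*)^2$. Pairing this against the sharpness lower bound $g(\bar x)-\min_{\X} g\geq \mu\cdot\dist(\bar x;\X^*)$ and cancelling one factor of $\dist(\bar x;\X^*)>0$ forces $\dist(\bar x;\X^*)\geq \frac{2\mu}{\rho}$, which contradicts the assumed strict upper bound.

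I do not anticipate a genuine obstacle; the argument is short and elementary. The only step requiring care is the first one --- correctly identifying stationarity with $0\in\partial(g+\delta_{\X})(\bar x)$ and justifying that the quadratic-accuracy inequality \eqref{eqn:stronger_ineq} transfers to the constrained objective $g+\delta_{\X}$ --- since this is exactly what converts the ``flatness'' at a stationary point into a quadratic bound on the optimality gap that cannot coexist with linear sharpness growth below the radius $2\mu/\rho$. Everything after that is algebra.
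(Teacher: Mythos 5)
Your proposal is correct and follows essentially the same route as the paper: the paper's one-line chain $\mu\cdot\dist(x;\X^*)\le g(x)-g(\bar x)\le\tfrac{\rho}{2}\dist^2(x;\X^*)$ is exactly your pairing of the sharpness lower bound with the quadratic upper bound obtained from \eqref{eqn:stronger_ineq} applied to $g+\delta_{\X}$ with $v=0$ at a stationary point. The only difference is presentational (you phrase it as a contradiction and spell out why the indicator preserves $\rho$-weak convexity, which the paper leaves implicit).
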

	\begin{proof}
		Fix a stationary point $x\in\X\setminus \X^*$ of \eqref{eqn:stat}. Choosing an arbitrary $\bar x \in\proj_{\X^*}(x)$,
		observe
		\begin{align*}
		\mu\cdot \dist(x; \X^*) \leq g(x)-g(\bar x) \leq  \frac{\rho}{2}\|x - \bar x\|^2=\frac{\rho}{2}\cdot\dist^2(x; \X^*).
		\end{align*}
		Dividing through by $\dist(x; \X^*)$, the result follows.
	\end{proof}

	In light of Lemma~\ref{lem:region_without_stat}, for any
        $\gamma > 0$ define the following tube 
	$$\mathcal{T}_{\gamma}:=\left\{x\in \mathcal{X}:\dist(x;\mathcal{X}^*) < \gamma\cdot \frac{\mu}{\rho}\right\}$$
	and the  constant
		\begin{equation}
		L:= \sup\left\{\|\zeta\|: \zeta\in \partial g(x),\,
		x\in \mathcal{T}_{1}\right\}. \label{eq: constant_L}
		\end{equation}
		Lemma~\ref{lem:region_without_stat} guarantees that the tubes $\mathcal{T}_{\gamma}$ contain no extraneous stationary points of the problem for any $\gamma\in (0,2]$. Moreover, observe that $\mu$ and $L$ play reciprocal roles; consequently, the ratio $\tau:=\mu/L$ should serve as a measure of conditioning. The following  lemma verifies the inclusion $\tau\in [0,1]$.
	
	\begin{lem}[Condition number]
		The inclusion $\tau\in [0,1]$ holds.
	\end{lem}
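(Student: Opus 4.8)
The plan is to verify the two inequalities $\tau \geq 0$ and $\tau \leq 1$ separately, of which only the latter requires real work. Since $\mu > 0$ by Assumption~\ref{assum:weak_conv_sharp} and $L \geq 0$ as a supremum of norms, once we know that $L$ is strictly positive the ratio $\tau = \mu/L$ is a well-defined nonnegative number; positivity of $L$ will drop out for free from the argument for $\tau \le 1$, so I would not treat it as a separate step.

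The crux is to show $L \geq \mu$. First I would fix any point $x \in \X \setminus \X^*$ lying in the tube $\mathcal{T}_1$, together with a nearest point $\bar x \in \proj_{\X^*}(x)$, so that $\|x - \bar x\| = \dist(x;\X^*)$. Because $g$ is real-valued and $\rho$-weakly convex, its shifted convex counterpart is finite everywhere and hence subdifferentiable everywhere, so $\partial g(x)$ is nonempty and I may select some $\zeta \in \partial g(x)$. The idea is to sandwich the quantity $g(x) - g(\bar x)$: from below using sharpness, and from above using the subgradient inequality \eqref{eqn:stronger_ineq}.

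Concretely, sharpness gives $g(x) - g(\bar x) \geq \mu \cdot \dist(x;\X^*)$, while applying \eqref{eqn:stronger_ineq} with base point $x$ and target $\bar x$ yields $g(x) - g(\bar x) \leq \langle \zeta, x - \bar x\rangle + \frac{\rho}{2}\|x - \bar x\|^2 \leq \|\zeta\|\cdot\dist(x;\X^*) + \frac{\rho}{2}\dist^2(x;\X^*)$. Chaining these two bounds and dividing through by $\dist(x;\X^*) > 0$ produces $\|\zeta\| \geq \mu - \frac{\rho}{2}\dist(x;\X^*)$. Since $\zeta \in \partial g(x)$ with $x \in \mathcal{T}_1$, the definition \eqref{eq: constant_L} of $L$ forces $L \geq \mu - \frac{\rho}{2}\dist(x;\X^*)$. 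Letting $\dist(x;\X^*) \to 0^{+}$ along points of $\mathcal{T}_1\setminus\X^*$ then gives $L \geq \mu$, and hence $\tau = \mu/L \leq 1$.

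The only delicate point I anticipate is the passage to the limit, which presumes the existence of points $x \in \X$ with $\dist(x;\X^*)$ arbitrarily small but nonzero, i.e. that $\X^*$ is not isolated inside $\X$ (the degenerate case $\X = \X^*$ being excluded, since there sharpness holds vacuously for every $\mu$). Granting this, the same chain of inequalities also shows $L \geq \mu - \frac{\rho}{2}\dist(x;\X^*) > \frac{\mu}{2} > 0$ for $x$ in the tube, which simultaneously confirms that $L$ is strictly positive and therefore that $\tau$ is a well-defined element of $[0,1]$.
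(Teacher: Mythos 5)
Your argument is correct, but it takes a genuinely different route from the paper's. The paper invokes Lebourg's mean value theorem: it produces a point $z$ on the open segment $(x,\bar x)$ and a subgradient $\zeta\in\partial g(z)$ with $g(x)-g(\bar x)=\langle\zeta,x-\bar x\rangle$ exactly. Since $z$ still lies in $\mathcal{T}_1$, Cauchy--Schwarz and sharpness give $\mu\cdot\dist(x;\X^*)\leq L\cdot\dist(x;\X^*)$ from a \emph{single} point $x\in\mathcal{T}_1\setminus\X^*$, with no error term and no limit. You instead evaluate the subgradient inequality \eqref{eqn:stronger_ineq} at the base point $x$ itself, which is more elementary (no appeal to Clarke's mean value theorem) but picks up the quadratic penalty $\tfrac{\rho}{2}\dist^2(x;\X^*)$, forcing you to send $\dist(x;\X^*)\to 0^+$ to recover $L\geq\mu$. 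The delicate point you flag --- the existence of points of $\X\setminus\X^*$ at arbitrarily small positive distance from $\X^*$ --- is in fact automatic here: whenever $\X\neq\X^*$, convexity of $\X$ lets you take the segment from some $\bar y\in\proj_{\X^*}(y)$ to a point $y\in\X\setminus\X^*$, and the intermediate value theorem applied to the continuous function $t\mapsto\dist(\bar y+t(y-\bar y);\X^*)$ yields points with any prescribed small positive distance. The truly degenerate case $\X=\X^*$ (where sharpness is vacuous and $L$ could be $0$) is implicitly excluded by both proofs, since the paper likewise begins by choosing a point of $\mathcal{T}_1\setminus\X^*$. Your byproduct $L>\mu/2>0$ on the tube is a nice observation that the paper's argument also yields (indeed it yields $L\geq\mu$ directly), so nothing is lost; the trade-off is simply elementarity of tools versus the need for a limiting argument.
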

	\begin{proof}
	Consider an arbitrary point $x\in \mathcal{T}_1\setminus \X^*$ and choose a point  $\bar x\in\proj_{\X^*}(x)$. By Lebourg's mean value theorem \cite[Theorem 2.3.7]{clarke}, there exists a point $z$ in the open segment $(x,\bar x)$ and a vector $\zeta\in \partial g(z)$ satisfying 
	\begin{equation}\label{eqn:mean_val_thm}
	g(x)-g(\bar x)=\langle \zeta,x-\bar x\rangle.
	\end{equation}
	Trivially $z$ lies in $\mathcal{T}_1$, and therefore $\|\zeta\|\leq L$.
	Using this estimate and sharpness in \eqref{eqn:mean_val_thm} yields the guarantee 
	$$\mu\cdot \dist(x;\mathcal{X}^*)\leq g(x)-g(\bar x)\leq \|\zeta\|\cdot\|x-\bar x\|\leq L\cdot\dist(x;\mathcal{X}^*).$$
The result follows.	
\end{proof}

	To summarize, we will use the following symbols to describe the parameters of the problem class \eqref{eqn:stat}: $\rho$ is the weak convexity constant of $g$, $\mu$ is the sharpness constant of $g$, $L$ is the maximal subgradient norm at points in the tube $\mathcal{T}_1$, and $\tau$ is the condition measure $\tau=\mu/L\in [0,1]$.

	\section{Polyak subgradient method}
In this section, we consider the Polyak subgradient method for the problem \eqref{eqn:stat}. A preliminary version of this material applied to the phase retrieval problem appeared in \cite{proj_weak_dim}; we present the arguments here for the sake of completeness.

The Polyak subgradient method is summarized in Algorithm~\ref{alg:polyak}.  This method requires knowing the optimal value $\min_{x\in\X} g(x)$. In a number of circumstances, this indeed is reasonable (e.g. exact penalty approach for solving nonlinear equations). The latter sections explore subgradient methods that do not require a known optimal value.

\bigskip

	\begin{algorithm}[H]
		\KwData{$x_0 \in \RR^d$}
		{\bf Step $k$:} ($k\geq 0$)\\
		$\qquad$ Choose $\zeta_k \in \partial g(x_k)$. {\bf If} $\zeta_k=0$, then exit algorithm.\\
	$\qquad$ Set $\displaystyle x_{k+1}=\proj_{\X}\left(x_{k} - \frac{g(x_k)-\min_{\mathcal{X}} g}{\|\zeta_k\|^2}\zeta_k\right)$.

		\caption{Polyak Subgradient Method}
		\label{alg:polyak}
	\end{algorithm}
	\bigskip
	
	The following theorem shows that Algorithm~\ref{alg:polyak}, originally proposed for convex problems, enjoys the same linear convergence guarantees for functions that are only weakly convex, provided it is initialized within a certain tube of the optimal solution set.

	\begin{thm}[Linear rate]\label{thm:qlinear}
	Fix a real $\gamma \in (0,1)$. 
		Then  Algorithm~\ref{alg:polyak} initialized at any point $x_0\in \mathcal{T}_{\gamma}$ produces iterates that 
		converge $Q$-linearly to $\X^*$, that is 
		\begin{align}\label{eqn:lin_rate1}
		  \dist^2(x_{k+1};\X^*) \leq \left(1-(1-\gamma) \tau^2\right)\dist^2(x_{k};\X^*).
		\end{align}
	\end{thm}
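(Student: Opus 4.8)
The plan is to follow the classical Polyak subgradient analysis, tracking the squared distance to $\X^*$ and inserting the weak convexity correction at exactly one place. Throughout, write $g^\ast := \min_{\X} g$, abbreviate $d_k := \dist(x_k;\X^*)$, and fix a projection $\bar x \in \proj_{\X^*}(x_k)$, so that $\|x_k - \bar x\| = d_k$. I would prove the estimate for a single step under the hypothesis $x_k \in \mathcal{T}_\gamma$ and then promote it to all iterates by induction: since the contraction factor $1 - (1-\gamma)\tau^2$ is strictly less than one, the sequence $d_k$ is strictly decreasing, so $x_0 \in \mathcal{T}_\gamma$ forces $x_k \in \mathcal{T}_\gamma$ for every $k$. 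This invariance is precisely what licenses the bound $\|\zeta_k\| \leq L$ through the definition \eqref{eq: constant_L} of $L$ on $\mathcal{T}_1 \supseteq \mathcal{T}_\gamma$.

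First I would dispose of the degenerate cases. If $x_k \in \X^*$ then $d_k = 0$ and the inequality is trivial; otherwise $x_k \in \mathcal{T}_\gamma \setminus \X^*$ is not stationary by Lemma~\ref{lem:region_without_stat}, so in particular $0 \notin \partial g(x_k)$, the chosen $\zeta_k$ is nonzero, and the Polyak step is well defined. The core computation then begins with nonexpansiveness of the projection onto the convex set $\X$: since $\bar x \in \X^* \subseteq \X$ is its own projection, writing $y_k := x_k - \frac{g(x_k)-g^\ast}{\|\zeta_k\|^2}\zeta_k$ gives $d_{k+1}^2 \leq \|y_k - \bar x\|^2$, and expanding the square produces the usual three terms.

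The one substantive departure from the convex argument is the lower bound on the cross term $\langle \zeta_k, x_k - \bar x\rangle$: instead of the subgradient inequality, I would invoke the global weak convexity estimate \eqref{eqn:stronger_ineq} with $y = \bar x$, $x = x_k$, $v = \zeta_k$, which yields $\langle \zeta_k, x_k - \bar x\rangle \geq (g(x_k) - g^\ast) - \tfrac{\rho}{2} d_k^2$. Substituting and collecting terms, the two quadratic-in-$(g(x_k)-g^\ast)$ contributions combine into a single negative square, and the distance reduces to $d_{k+1}^2 \leq d_k^2 - \frac{g(x_k)-g^\ast}{\|\zeta_k\|^2}\big[(g(x_k)-g^\ast) - \rho d_k^2\big]$.

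It remains to show the subtracted quantity is at least $(1-\gamma)\tau^2 d_k^2$, and this is where I expect the only genuine care is needed — the term $-\tfrac{\rho}{2}d_k^2$ from weak convexity is exactly what the tube is designed to absorb. Sharpness gives $g(x_k) - g^\ast \geq \mu d_k$, so the bracket is at least $\mu d_k - \rho d_k^2 = d_k(\mu - \rho d_k)$; the membership $x_k \in \mathcal{T}_\gamma$, that is $d_k < \gamma\mu/\rho$, then forces $\mu - \rho d_k > (1-\gamma)\mu$, keeping the bracket strictly positive and bounded below by $(1-\gamma)\mu d_k$. Feeding in sharpness once more for the prefactor together with $\|\zeta_k\|^2 \leq L^2$ bounds the subtracted quantity below by $\frac{\mu d_k}{L^2}(1-\gamma)\mu d_k = (1-\gamma)\tau^2 d_k^2$, which is \eqref{eqn:lin_rate1}. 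The main obstacle is thus conceptual rather than computational: one must verify that the positive weak-convexity error never overwhelms the sharpness-driven descent, and the tube radius $\gamma\mu/\rho$ with $\gamma < 1$ is exactly the threshold guaranteeing this, while simultaneously supplying the invariance that keeps $\|\zeta_k\| \leq L$ along the entire trajectory.
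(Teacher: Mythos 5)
Your proposal is correct and follows essentially the same route as the paper's proof: induction to keep the iterates in $\mathcal{T}_\gamma$, nonexpansiveness of $\proj_{\X}$, the weak-convexity inequality \eqref{eqn:stronger_ineq} on the cross term, and two applications of sharpness together with $d_k<\gamma\mu/\rho$ and $\|\zeta_k\|\leq L$ to extract the contraction factor $1-(1-\gamma)\tau^2$. No gaps.
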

	\begin{proof}
		We proceed by induction. Suppose that the theorem holds up to iteration $k$.  We will prove the inequality \eqref{eqn:lin_rate1}.
		To this end, choose $\bar x \in \proj_{\X^*}(x_k)$. Note that if $x_k$ lies in $\X^*$, there is nothing to prove. Thus we may suppose $x_k\notin \X^*$.
		Note that the inductive hypothesis implies $\dist(x_{k};\X^*)\leq \dist(x_{0};\X^*)$ and therefore $x_k$ lies in $\mathcal{T}_{\gamma}$. Lemma~\ref{lem:region_without_stat} therefore guarantees
		$\zeta_k\neq 0$.
		We successively deduce, by non-expansiveness of $\proj_{\mathcal{X}}$, that 
		\begin{align*}
		\|x_{k+1} - \bar x\|^2&\leq\left\|(x_{k} -\bar x)- \tfrac{g(x_k)-\min_{\mathcal{X}} g}{\|\zeta_k\|^2} \zeta_k\right\|^2 \\
		&= \|x_{k} - \bar x\|^2 + \frac{2(g(x_k) - g(\bar x))}{\|\zeta_k\|^2}\cdot\dotp{\zeta_k,\bar x - x_{k}} + \frac{(g(x_k) - g(\bar x))^2}{\|\zeta_k\|^2}\\
		&\leq \|x_{k} - \bar x\|^2 + \frac{2(g(x_k) - g(\bar x))}{\|\zeta_k\|^2}\left( g(\bar x) - g(x_k) + \frac{\rho}{2}\|x_k - \bar x\|^2 \right)\ + \frac{(g(x_k) - g(\bar x))^2}{\|\zeta_k\|^2} \\
		&= \|x_{k} - \bar x\|^2 + \frac{(g(x_k) - g(\bar x))}{\|\zeta_k\|^2}\left(\rho\|x_k - \bar x\|^2 -  (g(x_k) - g(\bar x))  \right)\\
		&\leq \|x_{k} - \bar x\|^2 + \frac{(g(x_k) - g(\bar x))}{\|\zeta_k\|^2}\left(\rho\|x_k - \bar x\|^2 -  \mu\|x_k - \bar x\|  \right) \\
		&= \|x_{k} - \bar x\|^2 + \frac{\rho (g(x_k) - g(\bar x))}{\|\zeta_k\|^2}\left( \|x_k - \bar x\| -\frac{\mu}{\rho}\right)\|x_k - \bar x\|.
		\end{align*}
		Combining the inclusion $x_k\in \mathcal{T}_{\gamma}$ with sharpness, we therefore deduce
		$$\dist^2(x_{k+1};\X^*)\leq \|x_{k+1} - \bar x\|^2\leq \left(1-\frac{(1-\gamma) \mu^2}{\|\zeta_k\|^2}\right)\|x_k-\bar x\|^2.$$
		The result follows.
	\end{proof}

	As a numerical illustration, let us apply the Polyak subgradient method (Figure~\ref{fig:pol_phase_cov}) to our two running examples, phase retrieval and covariance matrix estimation.  Notice that a linear rate of convergence is observed in all experiments except for two, with the rate improving monotonically  with an increasing number of measurements $m$. In the two exceptional experiments, the number of measurements $m$ is too small to guarantee that the initial point $x_0$ is within the basin of attraction, and the subgradient methods stagnates.
		
		\begin{figure}[!h]
		\centering
			\includegraphics[scale=0.55]{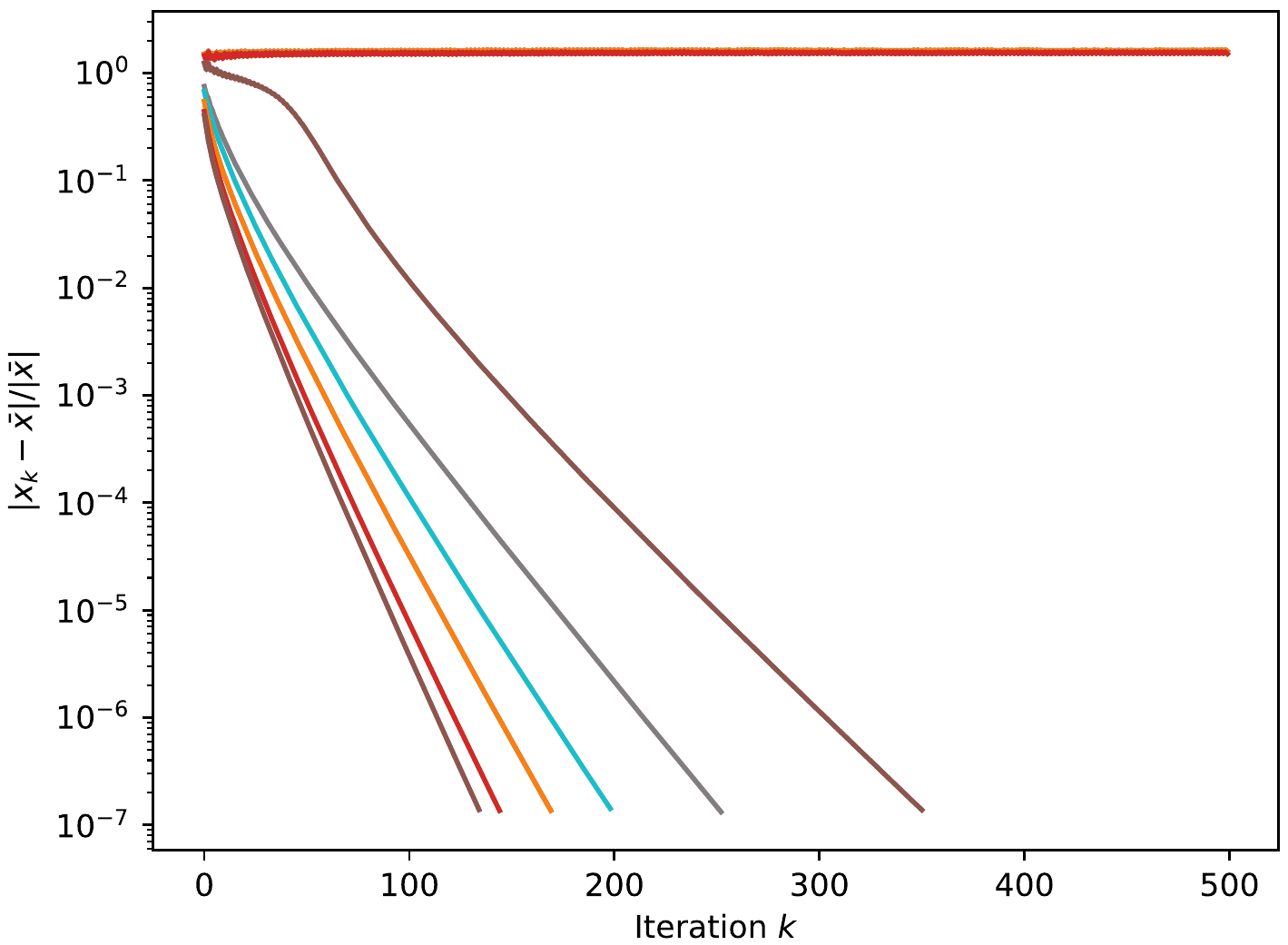}
			\includegraphics[scale=0.55]{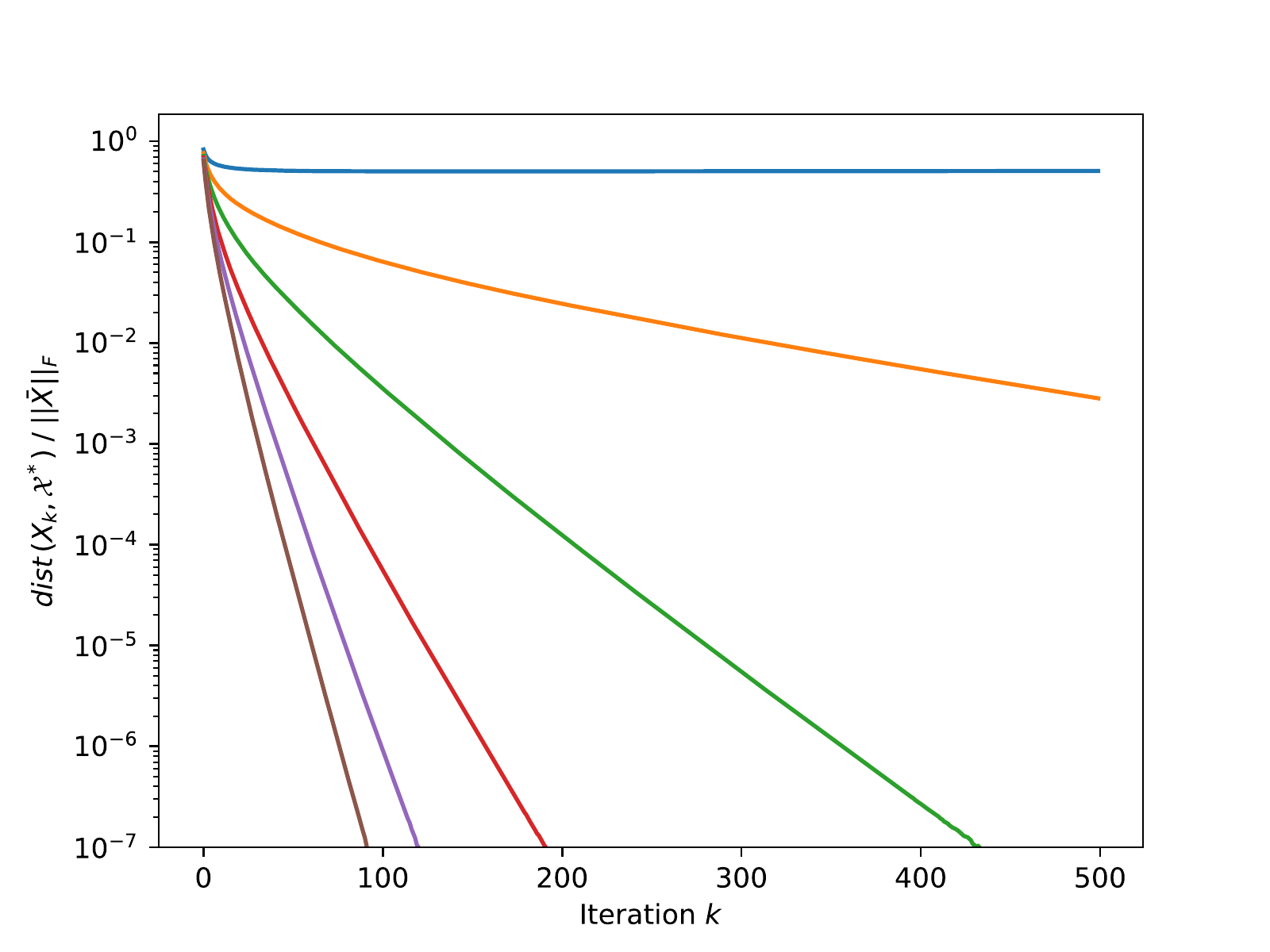}
			\caption{Polyak subgradient method. (Left) { Phase retrieval}  with the exact set-up; $d=5000$ and $m\in \{11000,12225, 13500, 14750, 16000, 17250,18500\}$. (Right) { Covariance matrix estimation} with the exact set-up;  $d=1000$, $r=3$, and $m\in \{5000, 8000, 11000, 14000, 17000, 20000\}$. In both experiments, convergence rates  uniformly improve with increasing $m$.
			}
			\label{fig:pol_phase_cov}
		\end{figure}

	\section{Subgradient method with constant step-size}
	Recall that the Polyak subgradient method
        (Algorithm~\ref{alg:polyak}) crucially relies on knowing the
        minimal value of the optimization problem
        \eqref{eqn:stat}. Henceforth, all the subgradient methods we
        consider are agnostic to this value. That being said, they
        will require some estimates on  the problem parameters $(\mu,
        \rho, L)$. We begin by analyzing a subgradient method with a
        constant step size
        (Algorithm~\ref{alg:const_step}). Constant-step schemes are
        often methods of choice in practice. We will show that when
        properly initialized, the subgradient method with constant
        stepsize generates iterates $x_k$ such that $\dist(x_k;\mathcal{X}^*)$ converges linearly up to a certain threshold.

	\smallskip
	
	\begin{algorithm}[H]
		\KwData{Initial point $x_0 \in \RR^d$ and stepsize $\alpha>0$}
		{\bf Step $k$:} ($k\geq 0$)\\
		$\qquad$ Choose $\zeta_k \in \partial g(x_k)$. {\bf If} $\zeta_k=0$, then exit algorithm.\\
$\qquad$ Set $x_{k+1}=\proj_{\X}\left(x_{k} - \alpha\cdot\frac{\zeta_k}{\|\zeta_k\|}\right)$.


		
		\caption{Subgradient method with constant stepsize}
		\label{alg:const_step}
	\end{algorithm}
	\bigskip

	The analysis we present fundamentally relies on the following estimate, often used in the analysis of subgradient methods. To simplify notation, for any point $x\in\R^d$, we set 
	$$E(x):=\dist^2(x;\X^*).$$
	Whenever $x$ has an index $k$ as a subscript, we will set $E_k:=E(x_k)$. The following lemma will feature in both the constant and geometrically decaying stepsize schemes.

	\begin{lem}[Basic recurrence]\label{lem:key_rec}
		Consider a point $x\in\mathcal{T}_1$ and a nonzero subgradient $\zeta\in \partial g(x)$, and define $x^+:=\proj_{\X}\left(x-\alpha\frac{\zeta}{\|\zeta\|}\right)$ for some $\alpha>0$. Then the estimate holds:		\begin{equation}\label{eqn:claimed_est}
		E(x^+)\leq \left(1+\tfrac{\rho\alpha}{L}\right)E(x)-2
		\alpha\tau\sqrt{E(x)} +\alpha^2.
		\end{equation}
	\end{lem}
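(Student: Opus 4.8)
The plan is to follow the template of the proof of Theorem~\ref{thm:qlinear}, but with the normalized constant step in place of the Polyak step, paying close attention to where the tube hypothesis $x\in\mathcal{T}_1$ enters. First I would fix a nearest optimal point $\bar x\in\proj_{\X^*}(x)$, so that $\|x-\bar x\|=\sqrt{E(x)}$ and $g(\bar x)=\min_{\X} g$. Since $\bar x\in\X^*\subseteq\X$ and $\proj_{\X}$ is nonexpansive, I can write
$$E(x^+)\leq\|x^+-\bar x\|^2\leq\left\|(x-\bar x)-\alpha\tfrac{\zeta}{\|\zeta\|}\right\|^2=\|x-\bar x\|^2-\tfrac{2\alpha}{\|\zeta\|}\langle\zeta,x-\bar x\rangle+\alpha^2.$$

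Next I would bound the cross term from above. The strong weak-convexity inequality \eqref{eqn:stronger_ineq}, applied with the subgradient $\zeta\in\partial g(x)$ at the pair $(x,\bar x)$, gives $\langle\zeta,\bar x-x\rangle\leq g(\bar x)-g(x)+\tfrac{\rho}{2}\|x-\bar x\|^2$, while sharpness gives $g(\bar x)-g(x)\leq-\mu\|x-\bar x\|$. Substituting both into the expansion above yields
$$\|x^+-\bar x\|^2\leq\|x-\bar x\|^2+\frac{\alpha}{\|\zeta\|}\Big(\rho\|x-\bar x\|^2-2\mu\|x-\bar x\|\Big)+\alpha^2.$$

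The main, and essentially the only, delicate step is to replace the a priori unknown quantity $\|\zeta\|$ by the uniform constant $L$, and at first sight the available bound $\|\zeta\|\leq L$ appears to point the wrong way for the positive $\rho$-term. The resolution is exactly the tube hypothesis: since $x\in\mathcal{T}_1$ we have $\|x-\bar x\|=\dist(x;\X^*)<\mu/\rho$, hence $\rho\|x-\bar x\|-2\mu<-\mu<0$, so the bracketed quantity $\rho\|x-\bar x\|^2-2\mu\|x-\bar x\|$ is nonpositive. Because $1/\|\zeta\|\geq 1/L$, multiplying this nonpositive bracket by $\alpha/\|\zeta\|$ rather than $\alpha/L$ can only make it smaller, so I obtain a valid upper bound after replacing $\|\zeta\|$ by $L$ throughout the bracket. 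This produces $\big(1+\tfrac{\rho\alpha}{L}\big)\|x-\bar x\|^2-\tfrac{2\mu\alpha}{L}\|x-\bar x\|+\alpha^2$, and substituting $\tau=\mu/L$ together with $\|x-\bar x\|=\sqrt{E(x)}$ recovers exactly \eqref{eqn:claimed_est}. I expect this sign bookkeeping tied to the tube condition to be the single point requiring genuine care; the remainder is the routine expand-and-estimate computation.
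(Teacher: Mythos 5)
Your proposal is correct and follows essentially the same route as the paper: nonexpansiveness of $\proj_{\X}$, the weak-convexity inequality \eqref{eqn:stronger_ineq}, sharpness, and then the observation that the coefficient multiplying $1/\|\zeta\|$ is nonpositive on $\mathcal{T}_1$ so that $\|\zeta\|\le L$ can be applied in the right direction. The paper packages this last step as a supremum of a linear function of $t=\alpha/\|\zeta\|$ over $t\ge\alpha/L$ with nonpositive slope, which is exactly your sign argument in different clothing.
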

	\begin{proof}
	Choose an arbitrary point $\bar x \in \proj_{\X^*}(x)$. Observe
	\begin{align*}
	\|x^+-\bar x\|^2\leq \left\|(x-\bar x)-\alpha\tfrac{\zeta}{\|\zeta\|}\right\|&=\|x-\bar x\|^2+\tfrac{2\alpha}{\|\zeta\|}\cdot\langle \zeta,\bar x-x
	\rangle +\alpha^2\\
	&\leq \|x-\bar x\|^2 +\tfrac{2\alpha}{\|\zeta\|}\cdot\left(g(\bar x)-g(x)+\frac{\rho}{2}\|x-\bar x\|^2\right)+\alpha^2\\
	&\leq \left(1+\frac{\alpha\rho}{\|\zeta\|}\right)\|x-\bar x\|^2-\frac{2\alpha \mu}{\|\zeta\|}\cdot\|x-\bar x\| +\alpha^2.
	\end{align*}
	Thus the inequality holds:
	\begin{equation*} 
	E(x^+)\leq \left(1+\tfrac{\alpha\rho}{\|\zeta\|}\right)E(x)-\tfrac{2\alpha \mu}{\|\zeta\|}\cdot\sqrt{E(x)} +\alpha^2,
	\end{equation*}
	and consequently taking into account $\alpha/\|\zeta\| \geq \alpha/L$ we have
	$$E(x^+)\leq \sup_{t\geq\alpha/L} \left\{\left(1+\rho t\right)E(x)-2\mu t\cdot\sqrt{E(x)} +\alpha^2\right\}.$$
	Notice, the function inside the supremum is linear in $t$ with slope $s:=\rho E(x)-2\mu\sqrt{E(x)}$. The inclusion $x\in \mathcal{T}_1$ directly implies  $s\leq 0$. Therefore the supremum on the right-hand-side is attained at $t=\tfrac{\alpha}{L}$, yielding the claimed estimate \eqref{eqn:claimed_est}.
	\end{proof}

	In light of Lemma~\ref{lem:key_rec}, we can now prove that the quantities $E(x_k)$ converge linearly below a certain fixed threshold. The proof is a modification of that in \cite[Section 4]{jstone}. 
	
	\begin{lem}[Contraction inequality]\label{lem:contract}
	Fix a constant $\alpha\in (0,\frac{\tau \mu}{\rho})$ and let $\{x_k\}_{k\geq 0}$ be the iterates generated by Algorithm~\ref{alg:const_step}. Define the quantity
		\begin{equation}\label{eqn:estar}
		E^* := \left ( \frac{\alpha L}{\mu +
			\sqrt{\mu^2-\alpha \rho L}}  \right )^2.
		\end{equation}
			Then whenever an iterate $x_k$ lies in $\mathcal{T}_1$, the estimate holds:
$$			E_{k+1}-E^*\leq  q_k(E_k-E^*),
		$$
	where $q_k:=1+\tfrac{\alpha}{L}\left(\rho-\tfrac{2 \mu}{\sqrt{E_k}+\sqrt{E^*}}\right)$ satisfies $q_k< 1$.
	\end{lem}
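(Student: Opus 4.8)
The plan is to treat the basic recurrence from Lemma~\ref{lem:key_rec} as a scalar dynamical system and to identify $E^*$ as its relevant fixed point. Abbreviating $\phi(E):=\left(1+\tfrac{\rho\alpha}{L}\right)E-\tfrac{2\alpha\mu}{L}\sqrt{E}+\alpha^2$ (recall $\tau=\mu/L$), Lemma~\ref{lem:key_rec} applied at an iterate $x_k\in\mathcal{T}_1$ with nonzero subgradient $\zeta_k$ yields $E_{k+1}\leq \phi(E_k)$. I would first verify by direct substitution that $E^*$ is a fixed point of $\phi$: writing $u=\sqrt{E}$, the equation $\phi(E)=E$ reduces to the quadratic $\rho u^2-2\mu u+\alpha L=0$. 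The standing hypothesis $\alpha<\tfrac{\tau\mu}{\rho}=\tfrac{\mu^2}{\rho L}$ is exactly what makes the discriminant $\mu^2-\alpha\rho L$ positive, so the two roots are real; rationalizing the smaller root recovers $\sqrt{E^*}=\tfrac{\mu-\sqrt{\mu^2-\alpha\rho L}}{\rho}$, which agrees with the definition \eqref{eqn:estar}. Hence $\phi(E^*)=E^*$.

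Next I would subtract this fixed-point identity from the recurrence and factor. From $E_{k+1}-E^*\leq \phi(E_k)-\phi(E^*)$ and cancellation of the constant $\alpha^2$, one is left with $\left(1+\tfrac{\rho\alpha}{L}\right)(E_k-E^*)-\tfrac{2\alpha\mu}{L}\left(\sqrt{E_k}-\sqrt{E^*}\right)$. The pivotal algebraic move is the conjugate identity $\sqrt{E_k}-\sqrt{E^*}=\tfrac{E_k-E^*}{\sqrt{E_k}+\sqrt{E^*}}$, which extracts a common factor of $E_k-E^*$ and collapses the expression into precisely $q_k(E_k-E^*)$ with $q_k$ as stated. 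Conceptually this is nothing more than linearizing $\phi$ about its fixed point, using the mean-value form of the increment of the square root; it is the heart of the argument but requires no heavy computation.

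Finally I would establish $q_k<1$. Since $\alpha/L>0$, this is equivalent to $\rho<\tfrac{2\mu}{\sqrt{E_k}+\sqrt{E^*}}$, i.e. $\sqrt{E_k}+\sqrt{E^*}<\tfrac{2\mu}{\rho}$, and both summands are individually controlled by $\mu/\rho$: membership $x_k\in\mathcal{T}_1$ gives $\sqrt{E_k}=\dist(x_k;\X^*)<\tfrac{\mu}{\rho}$, while the explicit formula gives $\sqrt{E^*}=\tfrac{\mu-\sqrt{\mu^2-\alpha\rho L}}{\rho}<\tfrac{\mu}{\rho}$ because $\alpha>0$. Adding the two strict bounds delivers the desired inequality. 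I expect the only delicate points to be bookkeeping of the sign conditions, namely confirming that the upper bound on $\alpha$ is what keeps the discriminant positive and that it is the \emph{smaller} root of the quadratic that yields $E^*$; the factorization and the final comparison are then routine.
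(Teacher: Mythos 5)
Your proposal is correct and follows essentially the same route as the paper's proof: identify $E^*$ as the minimal positive fixed point of the recurrence from Lemma~\ref{lem:key_rec}, subtract the fixed-point identity, factor via the conjugate identity $\sqrt{E_k}-\sqrt{E^*}=\tfrac{E_k-E^*}{\sqrt{E_k}+\sqrt{E^*}}$, and bound $q_k<1$ using $\sqrt{E_k}+\sqrt{E^*}<\tfrac{2\mu}{\rho}$. Your version is in fact slightly more explicit than the paper's, since you verify by the quadratic formula that the smaller root rationalizes to the stated $E^*$ and that $\sqrt{E^*}<\mu/\rho$, steps the paper leaves as ``an easy computation.''
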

\begin{proof}
		Looking back at the estimate \eqref{eqn:claimed_est}, consider  the following equation in the variable $e$:
		\begin{equation}\label{eqn:fixed_point1}
		e= \left(1+\tfrac{\alpha\rho}{L}\right)e-2\alpha\tau\cdot\sqrt{e}+\alpha^2. 
		\end{equation} 
		An easy computation shows that the minimal positive
		solution to \eqref{eqn:fixed_point1} is exactly $E^*$, defined in \eqref{eqn:estar}.
		Note that $E^*$ is well-defined by the inequality $\alpha\leq \tau\cdot \frac{\mu}{\rho}$.

	Subtracting \eqref{eqn:fixed_point1} from \eqref{eqn:claimed_est}  yields the estimate
		\begin{align*}
		E_{k+1}-E^*&\leq (1+\tfrac{\alpha\rho}{L})(E_k-E^*)-2\alpha \tau(\sqrt{E_k}-\sqrt{E^*})\\
		&= \left(1+\tfrac{\alpha}{L}\left(\rho-\tfrac{2 \mu}{\sqrt{E_k}+\sqrt{E^*}}\right)\right)(E_k-E^*).
		\end{align*}	
		Finally, notice 
		$$\rho-\frac{2\mu}{\sqrt{E_k}+\sqrt{E^*}}< \rho-\frac{2\mu}{2\mu/\rho}= 0.$$
This completes the proof of the lemma.
\end{proof}

	Iterating Lemma~\ref{lem:contract}, we see that  the quantities $E_k$ decrease to a value lower than $E^*$ at a linear rate. Figure~\ref{fig:const_step} illustrates this behavior on our two running examples. It is also clear from the figure that the linear rate of convergence improves as $E_k$ tends to $E^*$. An explanation is immediate  from the expression for $q_k$ in Lemma~\ref{lem:contract}. Indeed, as $E_k$ decreases, so do the contraction factors $q_k$, and for $E_k\approx E^*$, we have $q_k\approx (1-\tau^2)+\frac{\alpha\rho}{L}$. Thus as the step-size tends to zero, the limiting linear rate coincides with the ideal rate of $1-\tau^2$. 
	
	Another interesting feature, apparent in Figure~\ref{fig:const_step}, is that even after $E_k$ becomes smaller than $E^*$, all the following values $E_k$ stay close to $E^*$. This is the content of the following theorem. The convex version of this theorem appears in \cite[Theorem 2]{jstone}.
	\smallskip
	
			\begin{figure}[!h]
				\centering
				\includegraphics[scale=0.56]{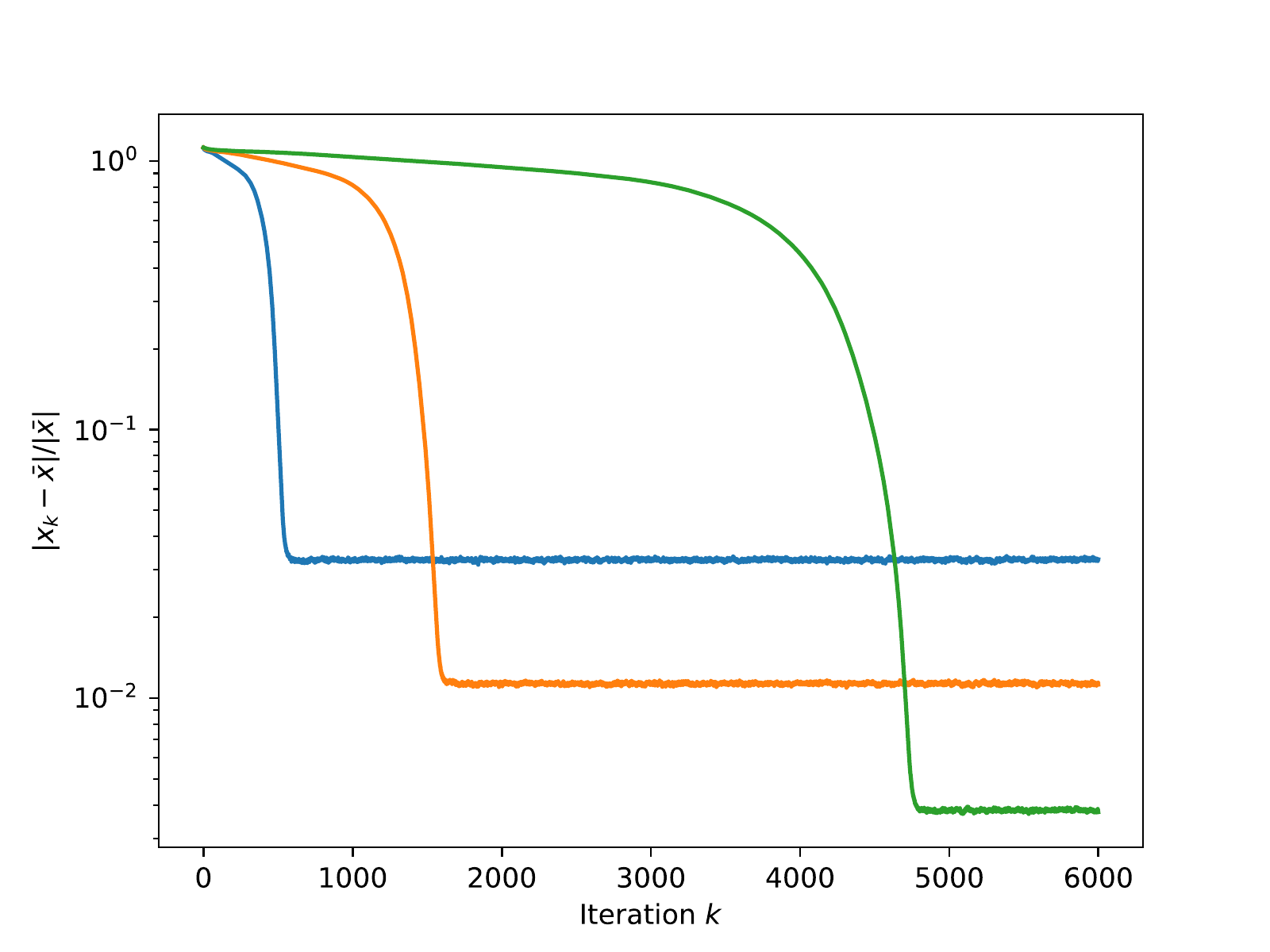}
				\includegraphics[scale=0.56]{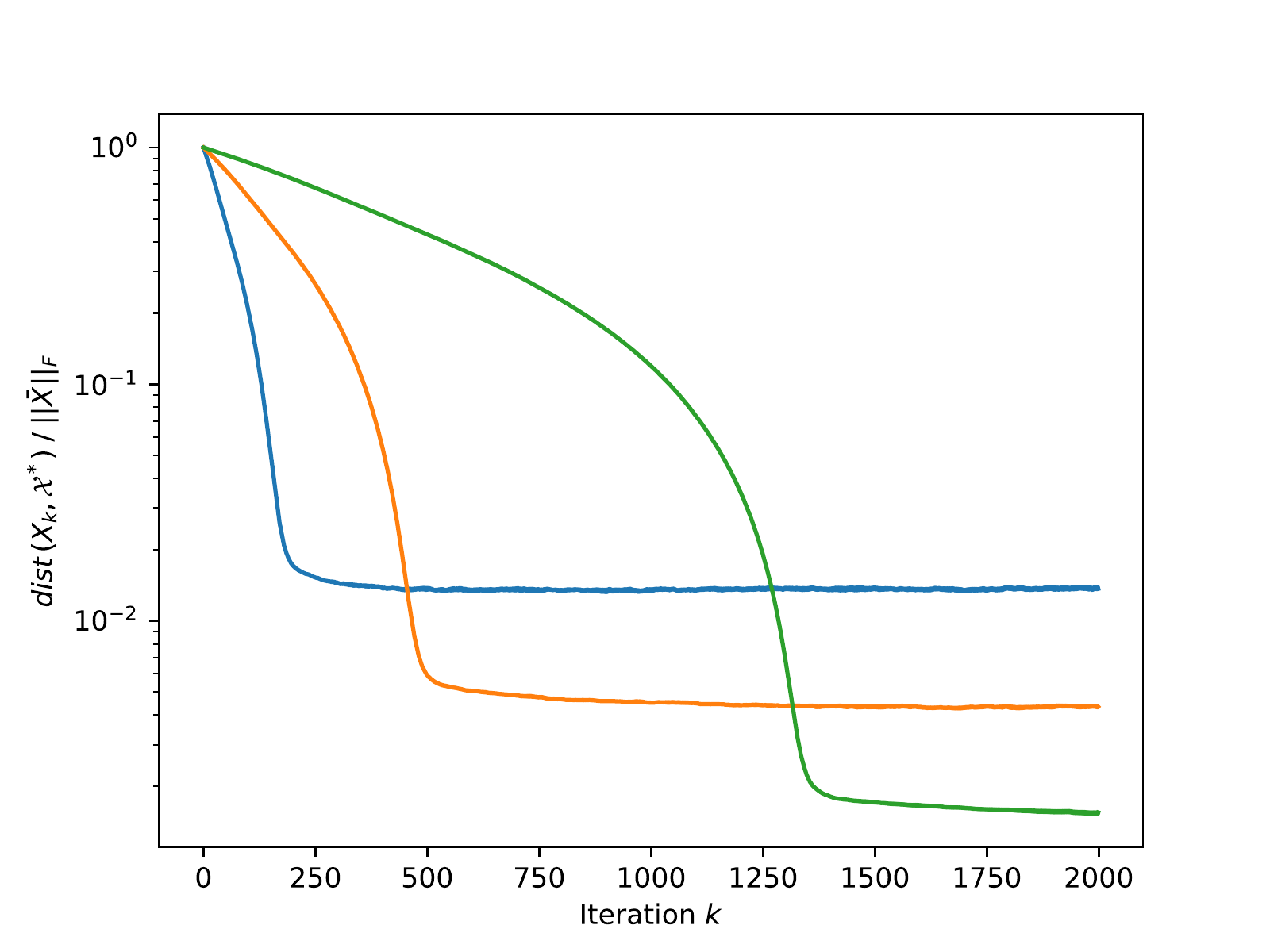}
				\caption{Constant step subgradient method. (Left) { Phase retrieval}  with the corrupted set-up; $d=1000$, $m=3000$, and $\alpha\in\{1,1/3,1/9\}$. 
					(Right) { Covariance matrix estimation} with the corrupted set-up; $d=1000$, $r=3$, $m=10000$,  and $\alpha\in\{1,1/3,1/9\}$. The lower curves curves correspond to smaller step-size in both experiments.
					}\label{fig:const_step}
			\end{figure}

		\begin{thm}[Convergence of fixed stepsize subgradient
		method] \label{thm: bounded_fixed_subgrad} 
Fix a real $\gamma\in (0,1)$ and a real $\alpha> 0$ satisfying 
		\begin{equation}\label{eqn:main_ineq_alph1}
		0 < \alpha < \frac{\gamma\tau}{\sqrt{1+2\tau^2}}\cdot \frac{\mu}{\rho}.
		\end{equation}
		 Let $x_k$ be the iterates generated by Algorithm~\ref{alg:const_step} with stepsize $\alpha$ and initial point $x_0\in \mathcal{T}_{\gamma}$.
		Define the constants 
		$$
		E^* := \left ( \frac{\alpha L}{\mu +
			\sqrt{\mu^2-\alpha \rho L}}  \right )^2\qquad \textrm{and}\qquad D := \sqrt{ \max \{E_0, 2 \alpha^2+ E^*\}}.$$
		Then for each index $k$, the estimates hold:
	$$\sqrt{E_k}\leq D\leq \tfrac{\gamma\mu}{\rho}\qquad\textrm{and}\qquad E_{k}-E^*\leq \max\left\{q^k (E_0-E^*),2
			\alpha^2 \right\},$$	
			where the coefficient $q := 1+\tfrac{\alpha}{L} \left ( \rho-\frac{\mu}{D} \right )$ satisfies $q\in (0,1)$.
		
		%
		%
	\end{thm}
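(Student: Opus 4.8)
The plan is to establish both assertions simultaneously by induction on $k$, maintaining throughout the invariant that every iterate remains in the tube $\mathcal{T}_1$, so that the recurrences of Lemma~\ref{lem:key_rec} and Lemma~\ref{lem:contract} are available.

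Before the induction I would record two preliminary facts about the constants. First, the stepsize restriction \eqref{eqn:main_ineq_alph1} forces $\alpha < \tau\mu/\rho$ (since $\gamma<1$ and $\sqrt{1+2\tau^2}>1$), so that $E^*$ in \eqref{eqn:estar} is well-defined exactly as in Lemma~\ref{lem:contract}. Second, and crucially, I would show $D\le \gamma\mu/\rho$: dropping the nonnegative term $\sqrt{\mu^2-\alpha\rho L}$ in the denominator of \eqref{eqn:estar} gives the crude bound $E^*\le \alpha^2 L^2/\mu^2 = \alpha^2/\tau^2$, whence $2\alpha^2+E^*\le \alpha^2(1+2\tau^2)/\tau^2$, and squaring \eqref{eqn:main_ineq_alph1} shows this is at most $(\gamma\mu/\rho)^2$. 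Together with $E_0 < (\gamma\mu/\rho)^2$ (because $x_0\in\mathcal{T}_\gamma$), this yields $D^2=\max\{E_0,2\alpha^2+E^*\}\le(\gamma\mu/\rho)^2$; in particular $\sqrt{E_k}\le D$ will force $x_k\in\mathcal{T}_\gamma\subseteq\mathcal{T}_1$. I would also dispatch $q\in(0,1)$ here: $q<1$ is immediate from $D\le\gamma\mu/\rho<\mu/\rho$, while $q>0$ follows from $D\ge\sqrt{2}\,\alpha$ (so $\alpha\mu/D\le\mu/\sqrt{2}\le L/\sqrt{2}<L$) together with $\mu\le L$.

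For the induction, the base case $k=0$ is immediate from the definition of $D$. Assuming both estimates at index $k$, the bound $\sqrt{E_k}\le D\le\gamma\mu/\rho$ places $x_k$ in $\mathcal{T}_1$ (and, via Lemma~\ref{lem:region_without_stat}, guarantees $\zeta_k\ne 0$ unless $x_k\in\X^*$, in which case the estimates are trivial), so Lemma~\ref{lem:contract} gives $E_{k+1}-E^*\le q_k(E_k-E^*)$. I would then split on the sign of $E_k-E^*$. If $E_k\ge E^*$, then $\sqrt{E_k}+\sqrt{E^*}\le 2\sqrt{E_k}\le 2D$ forces $q_k\le q$, and since $E_k-E^*\ge 0$ I conclude $E_{k+1}-E^*\le q(E_k-E^*)\le q\max\{q^k(E_0-E^*),2\alpha^2\}\le\max\{q^{k+1}(E_0-E^*),2\alpha^2\}$, using $q\cdot 2\alpha^2\le 2\alpha^2$.

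The remaining case $E_k<E^*$ is the main obstacle, since there $q_k$ may be negative and the contraction inequality is useless. Here I would return to the parabola form of the basic recurrence \eqref{eqn:claimed_est}: writing $s=\sqrt{E_k}$, the right-hand side is an upward parabola in $s$, hence on the interval $[0,\sqrt{E^*}]$ it is maximized at an endpoint; the endpoint $s=0$ gives value $\alpha^2$, while $s=\sqrt{E^*}$ gives exactly $E^*$ by the fixed-point equation defining $E^*$. Thus $E_{k+1}\le\max\{\alpha^2,E^*\}$, which yields $E_{k+1}-E^*\le\alpha^2\le 2\alpha^2$ and settles the error bound in this case. Finally, to close the induction on the first assertion, I feed the just-proved bound back in: $E_{k+1}\le E^*+\max\{q^{k+1}(E_0-E^*),2\alpha^2\}$, and since $q^{k+1}(E_0-E^*)\le\max\{E_0-E^*,0\}$ and $2\alpha^2+E^*\le D^2$, both branches give $E_{k+1}\le D^2$, i.e. $\sqrt{E_{k+1}}\le D$, completing the induction.
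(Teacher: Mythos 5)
Your proposal is correct and follows essentially the same route as the paper's proof: the same preliminary verification that $D\le\gamma\mu/\rho$ and $q\in(0,1)$, the same induction with the split on the sign of $E_k-E^*$, the contraction lemma with $\sqrt{E_k}+\sqrt{E^*}\le 2D$ in the first case, and the endpoint-maximization of the convex parabola in $\sqrt{E_k}$ over $[0,\sqrt{E^*}]$ in the second. The only cosmetic difference is that you maximize the basic recurrence directly (obtaining the bound $\alpha^2$) where the paper maximizes the subtracted form (obtaining $2\tau\alpha\sqrt{E^*}$); both yield $E_{k+1}-E^*\le 2\alpha^2$.
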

	
	\begin{proof}
	We first verify the claims that are independent of the iteration counter. To this end, observe that \eqref{eqn:main_ineq_alph1} directly implies $\alpha\leq \tau\cdot\tfrac{\mu}{\rho}$, and therefore $E^*$ is well defined.
Next, we show $D< \frac{\gamma\mu}{\rho}$. Indeed, noting $\sqrt{E^*}\leq \alpha\tau^{-1}$ and using \eqref{eqn:main_ineq_alph1}, we deduce 
		$$D^2=\max\{E_0,2\alpha^2+E^*\}\leq \max\{E_0,\alpha^2(2+\tau^{-2})\}\leq \left(\frac{\gamma\mu}{\rho}\right)^2.$$
	Next, we show the inclusion $q\in (0,1)$. To this end, observe 
		$$-1\leq -\frac{\tau\alpha}{D}< \tfrac{\alpha}{L}\left(\rho-\tfrac{ \mu}{D}\right)\leq (1-\gamma^{-1})\cdot\frac{\rho\alpha}{L}<0,$$
		where the first inequality follows from the inequality, $\alpha\leq D$,
 and the third follows from  the inequality, $D<\frac{\gamma\mu}{\rho}$. Thus we conclude $q\in (0,1)$, as claimed.

	We now proceed by induction. Fix an index $k$ and suppose as inductive hypothesis that for each index $i=0,1,\ldots, k$, the estimates hold: 
	$$\sqrt{E_i}\leq D\qquad \textrm{and}\qquad E_{i}-E^*\leq \max\left\{q^{i} (E_0-E^*),2\alpha^2\right\}.$$	
	 Let us consider two cases. Suppose first $E_k\geq E^*$. Then by applying Lemma~\ref{lem:contract}, we deduce
			\begin{align*}
		E_{k+1}-E^*
		 &\leq \left(1+\tfrac{\alpha}{L}\left(\rho-\tfrac{2 \mu}{\sqrt{E_k}+\sqrt{E^*}}\right)\right)(E_k-E^*)\\
		 &\leq \left(1+\tfrac{\alpha}{L}\left(\rho-\tfrac{\mu}{D}\right)\right)(E_k-E^*)\\
		 &=q (E_k-E^*).
		\end{align*}
		Suppose now that the second case, $E_k<E^*$, holds. Then Lemma~\ref{lem:contract} implies 
		\begin{align*}
		E_{k+1}&\leq E_k+\tfrac{\alpha\rho}{L}(E_k-E^*)-2\alpha \tau(\sqrt{E_k}-\sqrt{E^*})\\
		&\leq \max_{E\in [0,E^*]}\{E+\tfrac{\alpha\rho}{L}(E-E^*)-2\alpha \tau(\sqrt{E}-\sqrt{E^*})\}\\
		&=\max\{E^*,\tfrac{\alpha}{L}(2\mu\sqrt{E^*}-\rho E^*)\}.
		\end{align*}
		Subtracting $E^*$, we conclude 
		\[E_{k+1}-E^* \le \max \{0, 2 \tau \alpha \sqrt{E^*}\} \le 2 \alpha^2.\]
		Thus in both cases, we have the estimate 
		$$E_{k+1}-E^*\leq \max \left \{q(E_k-E^*), 2
		\alpha^2  \right \}.$$
		In particular, we immediately deduce  $\sqrt{E_{k+1}} \le D$. Applying the inductive hypothesis, we conclude
		$$E_{k+1}-E^*\leq \max \left \{q\cdot\max\{ q^k(E_0-E^*),2\alpha^2\}, 2
		\alpha^2  \right \}\leq \max\{q^{k+1}(E_0-E^*),2\alpha^2\}.$$ The theorem is proved. 
	\end{proof}

\section{Geometrically decaying step}
	
	In the last section, we showed linear convergence of the constant step size scheme up to a fixed tolerance $E^\ast$. To obtain a linearly convergent method to the true solution set, we will allow the step-size to decrease geometrically. The analogous strategy in the convex setting goes back to \cite{goff}, and our argument follows the same strategy.
	The intuition for why one may expect linear convergence under such step sizes may be gleaned from the Polyak method under the optimal step size 
	$$\alpha_k = \frac{g(x_k) - \min g(x)}{\|\zeta_k\|}.$$
	It is easy to verify that since $E_k$ tend to zero $Q$-linearly, the steps $\alpha_k$ tends to zero R-linearly.
	We implement such a geometrically decaying stepsize in Algorithm~\ref{alg:geometrically_step} and prove linear convergence of the method in Theorem~\ref{thm:geometric}.
	
	\smallskip
	
		\begin{algorithm}[H]
		\KwData{Real $\lambda>0$ and $q\in (0,1)$.}
		

		{\bf Step $k$:}  $(k\geq 0)$ \\
		\qquad Choose $\zeta_k \in \partial g(x_k)$. {\bf If} $\zeta_k=0$, then exit algorithm.
		
		\qquad Set stepsize $\alpha_{k} = \lambda\cdot q^{k}$.\\
		\qquad Update iterate  $x_{k+1}=\proj_{\X}\left(x_{k} - \alpha_k \frac{\zeta_k}{\norm{\zeta_k}}\right)$.\\
		
		
		\caption{Subgradient method with
			geometrically decreasing stepsize }
		\label{alg:geometrically_step}
	\end{algorithm}

	
	\begin{thm} \label{thm:geometric} Fix a real  $\gamma \in (0,1)$  and suppose  $\tau  \le
		\sqrt{ \frac{1}{2-\gamma} }$. 
		Set 
		$$\lambda:=\frac{\gamma \mu^2}{\rho L}\quad \textrm{ and }\quad q:=\sqrt{1-(1-\gamma) \tau^2}.$$ 
		Then the iterates $x_k$ generated by
                Algorithm~\ref{alg:geometrically_step}, initialized at some
                point $x_0 \in \mathcal{T}_{\gamma}$, satisfy:
		\begin{equation} \label{eq:geometric_rate}
		\dist^2(x_k;\X^*) \leq \frac{\gamma^2 \mu^2}{\rho^2}
		\left(1-(1-\gamma)\tau^2\right)^{k}.
		\end{equation}
	\end{thm}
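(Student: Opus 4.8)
The plan is to run an induction on the iteration counter $k$, establishing simultaneously that each iterate $x_k$ stays inside the tube $\mathcal{T}_{\gamma}$ and satisfies the target bound \eqref{eq:geometric_rate}. The base case $k=0$ is the hypothesis $x_0\in\mathcal{T}_\gamma$, which gives $\dist(x_0;\X^*)<\gamma\mu/\rho$, exactly \eqref{eq:geometric_rate} at $k=0$. For the inductive step, the engine is Lemma~\ref{lem:key_rec} (Basic recurrence) applied with $\alpha=\alpha_k=\lambda q^k$: since $x_k\in\mathcal{T}_1$ (in fact $\mathcal{T}_\gamma\subset\mathcal{T}_1$), and the subgradient $\zeta_k$ is nonzero by Lemma~\ref{lem:region_without_stat}, we obtain
\begin{equation*}
E_{k+1}\leq \left(1+\tfrac{\rho\alpha_k}{L}\right)E_k - 2\alpha_k\tau\sqrt{E_k}+\alpha_k^2.
\end{equation*}
The natural ansatz, matching the claimed rate, is to posit $E_k\leq \tfrac{\gamma^2\mu^2}{\rho^2}q^{2k}$ where $q^2=1-(1-\gamma)\tau^2$, so that $\sqrt{E_k}\leq \tfrac{\gamma\mu}{\rho}q^k$ and $\alpha_k=\lambda q^k=\tfrac{\gamma\mu^2}{\rho L}q^k$ decays at the same geometric rate. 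I would substitute these into the recurrence and verify algebraically that the right-hand side is at most $\tfrac{\gamma^2\mu^2}{\rho^2}q^{2(k+1)}$.

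The key observation making this clean is that every term on the right-hand side of the recurrence carries a common factor $q^{2k}$ once the ansatz is inserted: the term $(1+\rho\alpha_k/L)E_k$ contributes a leading $\tfrac{\gamma^2\mu^2}{\rho^2}q^{2k}$ plus a higher-order piece $\tfrac{\rho\alpha_k}{L}E_k\propto q^{3k}$, the middle term $2\alpha_k\tau\sqrt{E_k}\propto q^{2k}$, and $\alpha_k^2\propto q^{2k}$. Since $q\in(0,1)$, I can bound the $q^{3k}$ term by $q^{2k}$ (i.e. use $q^k\leq 1$) to pull everything down to a common $q^{2k}$ scale. After factoring out $\tfrac{\gamma^2\mu^2}{\rho^2}q^{2k}$, the inductive step reduces to a single scalar inequality in the constants $\gamma,\tau$, and the goal is to show the resulting bracket is at most $q^2=1-(1-\gamma)\tau^2$. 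Plugging in $\lambda=\gamma\mu^2/(\rho L)$ and recalling $\tau=\mu/L$, the cross term $2\alpha_k\tau\sqrt{E_k}$ produces a factor $2\gamma\tau^2$ and the $\alpha_k^2$ term a factor $\gamma^2\tau^2/\gamma^2=\tau^2\cdot(\gamma\mu^2/\rho L)^2\cdot\rho^2/(\gamma^2\mu^2)$, which simplifies to $\gamma^2\tau^4$ type quantities.

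The main obstacle I anticipate is the bookkeeping in that final scalar inequality, specifically confirming that the hypothesis $\tau\leq \sqrt{1/(2-\gamma)}$ is exactly what is needed to close it. I expect the inequality to collapse to something like $1+\gamma\tau^2(\text{terms}) - 2\gamma\tau^2 \leq 1-(1-\gamma)\tau^2$, where the $\tau\leq\sqrt{1/(2-\gamma)}$ bound controls the positive $\gamma\tau^2\cdot(\text{extra})$ contributions coming from the weak-convexity defect term $\rho\alpha_k/L$ and from the $\alpha_k^2$ term; those are precisely the nonconvex corrections absent in the convex analysis of \cite{goff}. I would carry out the substitution carefully, isolate the coefficient of $\tau^2$, and check that the stated threshold on $\tau$ makes the net coefficient of $\tau^2$ at least $(1-\gamma)$ in absolute value with the correct sign. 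Once that scalar inequality is verified, the induction closes and \eqref{eq:geometric_rate} follows for all $k$; the containment $x_{k+1}\in\mathcal{T}_\gamma$ is then immediate from $\sqrt{E_{k+1}}\leq \tfrac{\gamma\mu}{\rho}q^{k+1}<\tfrac{\gamma\mu}{\rho}$, which also retroactively justifies the application of Lemma~\ref{lem:key_rec} at the next step.
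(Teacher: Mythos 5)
Your overall strategy coincides with the paper's: induct on $k$, apply Lemma~\ref{lem:key_rec} with $\alpha_k=\lambda q^k$, and close the induction with the ansatz $\sqrt{E_k}\leq Mq^k$ where $M=\gamma\mu/\rho=\lambda/\tau$. However, there is a genuine gap in the step where you propose to ``substitute'' the inductive bound into the recurrence. The right-hand side of \eqref{eqn:claimed_est}, viewed as a function of $e=\sqrt{E_k}$, is the convex quadratic $\left(1+\tfrac{\rho\alpha_k}{L}\right)e^2-2\alpha_k\tau e+\alpha_k^2$, whose minimum sits at an \emph{interior} point of the interval $[0,Mq^k]$ (indeed the vertex is at $e=\alpha_k\tau/(1+\rho\alpha_k/L)<\alpha_k/\tau=Mq^k$). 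So the bound is not monotone in $\sqrt{E_k}$, and plugging in the upper endpoint $\sqrt{E_k}=Mq^k$ does not majorize the right-hand side. The paper handles this by writing $\sqrt{E_k}=Rq^k$, maximizing over $R\in[0,M]$, and using convexity to reduce to the two endpoints $R=0$ and $R=M$. The endpoint you are implicitly checking, $R=M$, closes on its own: the bracket becomes $1+\gamma\tau^2 q^k-2\tau^2+\tau^2\leq 1-(1-\gamma)\tau^2=q^2$, needing only $q^k\leq 1$ (this is the discriminant computation in the paper).

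The endpoint you are missing, $R=0$, yields $E_{k+1}\leq \alpha_k^2=\lambda^2q^{2k}=\tau^2M^2q^{2k}$, and to continue the induction one needs $\tau^2M^2q^{2k}\leq M^2q^{2(k+1)}$, i.e.\ $\tau\leq q$. This is \emph{exactly} where the hypothesis $\tau\leq\sqrt{1/(2-\gamma)}$ enters (it rearranges to $\tau^2\leq 1-(1-\gamma)\tau^2=q^2$). Your proposal misattributes the role of that hypothesis to taming the weak-convexity correction $\rho\alpha_k/L$ and the $\alpha_k^2$ term in the $R=M$ case; those actually cancel without it. Without the $R=0$ check the induction can fail when $E_k$ happens to be much smaller than $M^2q^{2k}$, because the stabilizing negative cross term $-2\alpha_k\tau\sqrt{E_k}$ disappears while $+\alpha_k^2$ remains. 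Adding the endpoint analysis (convexity in $R$, then verify $\lambda^2\leq M^2q^2$) repairs the argument and recovers the paper's proof.
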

	
	\begin{proof} 
		We will prove the result by induction. 
		To this end, suppose the bound \eqref{eq:geometric_rate}  holds for all
		$i = 0, \hdots, k$. Appealing to Lemma~\ref{lem:key_rec}.
and using the relation $\alpha_k=\lambda q^k$, we obtain
		\begin{equation}\label{eqn:key_eqn2}
		E_{k+1}\leq \left (1 + \frac{\rho\lambda q^k}{L}
                \right ) E_k-2\lambda \tau  q^k \sqrt{E_k}+\lambda^2 q^{2k}.
		\end{equation}
		Define the constant $M := \frac{\gamma \mu}{\rho}$. Recall the induction assumption guarantees
		$\sqrt{E_k} \leq Mq^k$. Let us therefore fix some value $R\in [0,M]$ satisfying $\sqrt{E_k} = R
		q^k$.
		Inequality \eqref{eqn:key_eqn2} then implies
		$$E_{k+1}\leq \max_{R\in [0,M]} \left
                  \{R^2q^{2k}+\frac{\rho\lambda
                    R^2}{L}q^{3k}-2\lambda \tau R q^{2k}+ \lambda^2
                  q^{2k} \right \}.$$
		Note that the expression inside the maximum is a convex quadratic in $R$ and therefore 
		 the maximum must occur either at $R=0$ or $R=M$. We therefore deduce 
		\begin{equation} \label{eq:recurrence_relation}
                  E_{k+1}\leq q^{2k}\cdot \max\left \{\lambda^2 ,~
		M^2+\frac{\rho\lambda}{L} M^2q^{k}-2\lambda \tau M
                + \lambda^2 \right \}.
		\end{equation}
	To complete the induction, it is therefore sufficient to show
		\begin{equation} \label{eq:M_condition} \lambda^2  \leq M^2 q^2 \qquad \textrm{ and }\qquad
			M^2+\frac{\rho\lambda}{L} M^2q^{k}-2\lambda \tau M + \lambda^2  \leq
			M^2q^2. \end{equation}
		First, we show that $M$ satisfies the first property. Note the equality $M =
		\frac{\lambda }{\tau}$. Hence, it suffices to show that $\tau \le q$,
Observe that the assumption $\tau \le \sqrt{\frac{1}{2-\gamma}}$ directly implies
		\begin{align*}
		\tau^2 +
		(1-\gamma) \tau^2 \le 1. 
		\end{align*}
		Rearranging yields $\tau^2 \le
		1-(1-\gamma)\tau^2 = q^2$. Hence, the first
		condition in \eqref{eq:M_condition} holds.
		
		Next we show that $M$ satisfies the second property in
		\eqref{eq:M_condition}. Thus, rearranging the expression, we must establish
		\begin{equation} \label{eq:M_quad_condition}
		\left (1+\frac{\rho\lambda}{L} q^{k}-q^2 \right
                )M^2-2\lambda \tau M + \lambda^2  \leq
		0.\end{equation}
		We will show that the quadratic on the left-hand-side in $M$ has two real positive roots. To this end, a quick computation shows that the two roots are 
		\begin{align*}\frac{\lambda
			\tau \pm\sqrt{\lambda^2\tau^2-\lambda^2\left
                  (1+\frac{\rho\lambda}{L}
				q^{k}-q^2\right )}}{1+\frac{\rho\lambda}{L} q^{k}-q^2}
		&= \frac{\lambda}{\tau \mp \sqrt{\tau^2 - \left (1+
                  \frac{\rho \lambda}{L} q^k - q^2\right )}}.
		\end{align*}
	To see that the discriminant is nonnegative, observe
		\begin{align*}
		\tau^2-\left (1+ \frac{\rho \lambda}{L} q^k-q^2
                  \right ) &\ge \tau^2 - \left (1+\frac{\rho \lambda}{L}
		-q^2 \right )
		= \tau^2 -(1 + \gamma \tau^2 -q^2) =0. 
		\end{align*}
		Thus the convex quadratic in \eqref{eq:M_quad_condition} has two real roots, and our choice $M=\lambda/\tau$ lies between them.
		Hence the condition \eqref{eq:M_quad_condition} holds, and the inductive step is complete.

	\end{proof}
	
	We now illustrate the performance of Algorithm~\ref{alg:geometrically_step} on our two running examples in Figure~\ref{fig:geo_step}. 
Empirically, we observed that $\lambda> 0$ and $q\in (0,1)$ must be tuned for performance, which is what we did in the experiments.
	We observe linear convergence in all cases, and the convergence rate of the method improves monotonically as the chosen rate $q$ is decreased. While the Polyak scheme pictured in Figure~\ref{fig:pol_phase_cov} clearly outperforms all other methods, the geometrically decaying step scheme performs much better than the constant step scheme in Figure~\ref{fig:const_step}. 
		\begin{figure}[!h]
			\centering
			\includegraphics[scale=0.56]{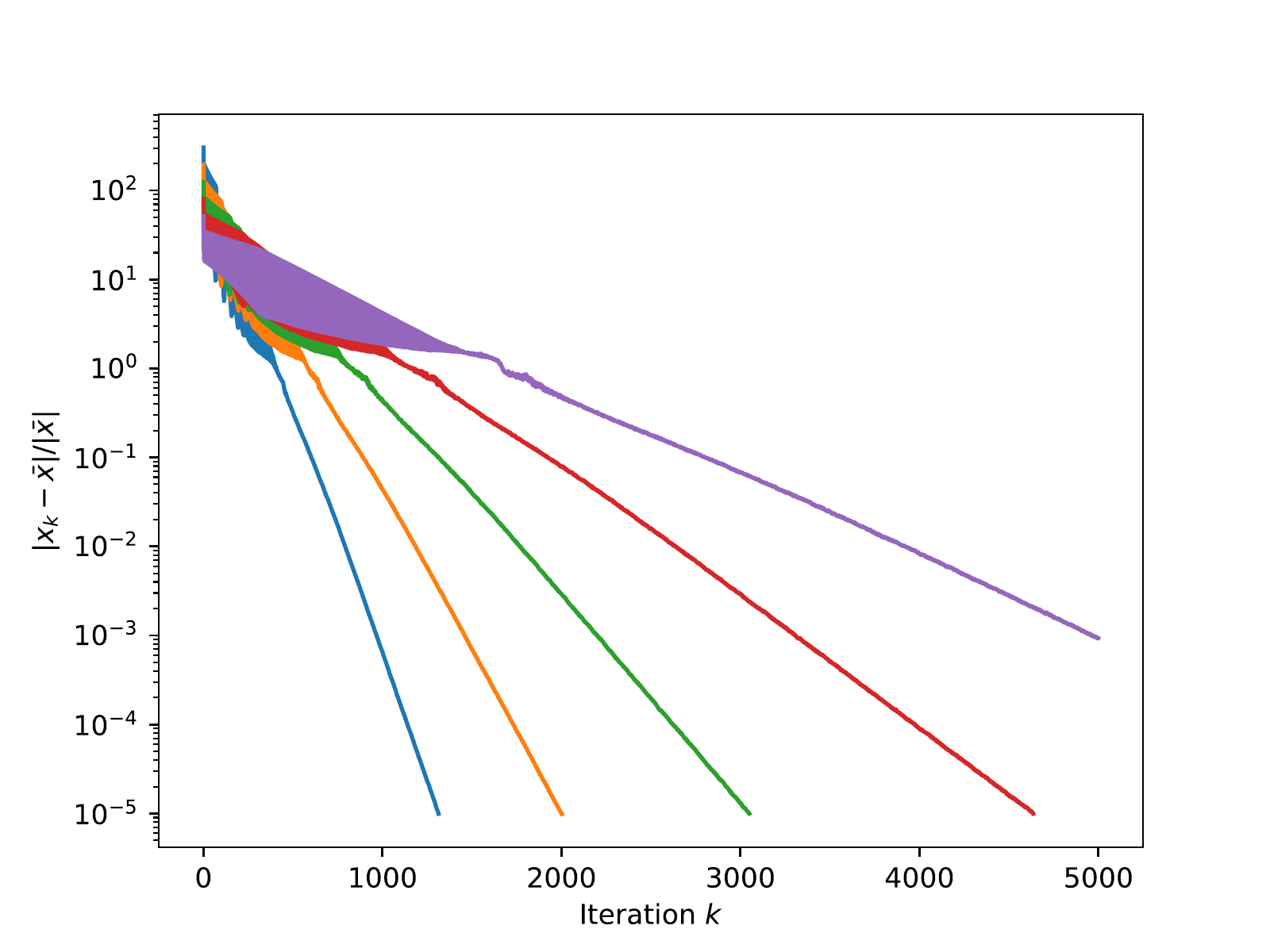}
			\includegraphics[scale=0.56]{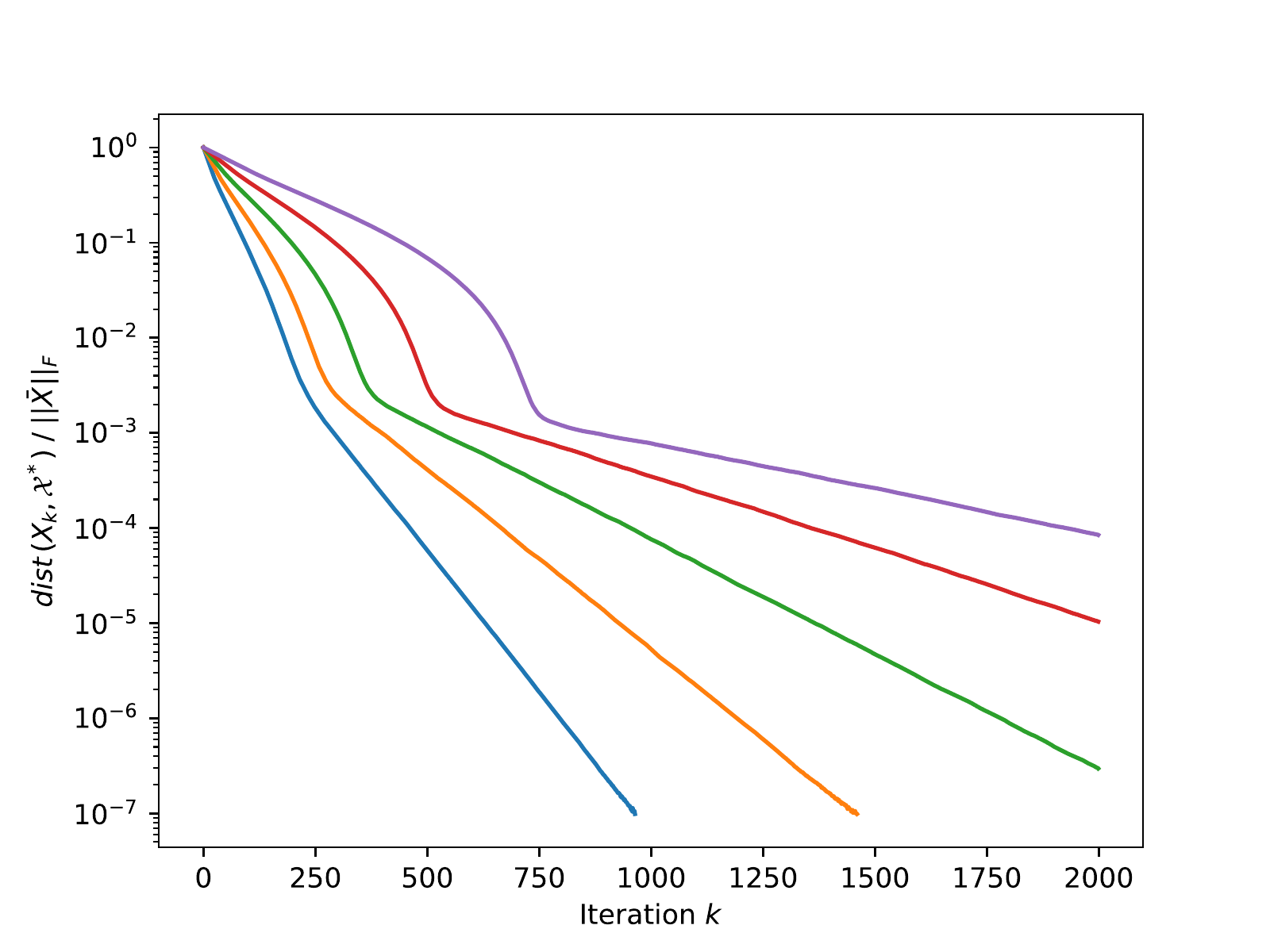}
			\caption{Geometrically decaying step size. (Left) { Phase retrieval}  with the corrupted set-up; $d=1000$, $m=3000$, $q\in\{0.983,0.989,0.993,0.996,0.997\}$. (Right) { Covariance matrix estimation} with the corrupted set-up; $d=1000$, $r=3$, $m=10000$, 
			$q\in\{0.986, 0.991, 0.994, 0.996, 0.998\}$. The depicted rates uniformly improve with lower values of $q$, in both figures.}
			\label{fig:geo_step}
		\end{figure}

\let\oldbibliography\thebibliography
\renewcommand{\thebibliography}[1]{%
  \oldbibliography{#1}%
  \setlength{\itemsep}{-1pt}%
}

	\bibliographystyle{plain}
	\bibliography{bibliography}

\end{document}